\documentclass[draft]{amsart}
\usepackage{amsmath}
\usepackage{amsfonts}
\usepackage{amssymb}
\usepackage{graphicx}
\usepackage{color}
\usepackage[all]{xy}
\setcounter{MaxMatrixCols}{30}
\providecommand{\U}[1]{\protect\rule{.1in}{.1in}}

\newtheorem{theorem}{Theorem}[section]

\newtheorem{caution}[theorem]{Caution}
\newtheorem{definition}[theorem]{Definition}
\newtheorem{fact}[theorem]{Fact}

\newtheorem{lemma}[theorem]{Lemma}

\newtheorem{remark}[theorem]{Remark}


\begin{document}

\title[Comparison of Abelian 2-categories]{Comparison of the definitions of Abelian 2-categories}

\author{Hiroyuki NAKAOKA }

\address{Graduate School of Mathematical Sciences, The University of Tokyo 
3-8-1 Komaba, Meguro, Tokyo, 153-8914 Japan}

\email{deutsche@ms.u-tokyo.ac.jp}
\thanks{The author wishes to thank Dr. Mathieu Dupont, for pointing out the author's misunderstanding of Definition 165 in \cite{D}.}

\maketitle

\begin{abstract}
In the efforts to define a 2-categorical analog of an abelian category, two (or three) notions of \lq\lq abelian 2-categories" are defined in \cite{N} and \cite{D}.
One is the {\it relatively exact 2-category} defined in \cite{N}, and the other(s) is the {\it (2-)abelian} $\mathrm{Gpd}${\it -category} defined by Dupont \cite{D}.
We compare these notions, using the arguments in \cite{N} and \cite{D}.
Since they proceed independently in their own way, in different settings and terminologies, it will be worth while to collect and unify them.
In this paper, by comparing their definitions and arguments, we show the relationship among these classes of 2-categories.
\end{abstract}

\section{introduction}
Motiveted by \cite{RMV}, we defined a general class of 2-categories \lq {\it relatively exact 2-categories}' in \cite{N} (originally written as our master's thesis in 2006), so as to make the 2-categorical homological algebra work well in an abstract setting.

A relatively exact 2-category is a generalization of $\mathrm{SCG}$ ($=$ the 2-category of symmetric categorical groups), and defined as a 2-categorical analog of an abelian category.
\begin{center}
\begin{tabular}
[c]{|c|c|c|}\hline
& category & 2-category\\\hline
general theory & abelian category & relatively exact 2-category\\\hline
example & $\mathrm{Ab}$ & $\mathrm{SCG}$\\\hline
\end{tabular}
\end{center}
On the other hand, with a similar motivation, Dupont defined two classes of 2-categries \lq{\it 2-abelian $\mathrm{Gpd}$-category}' and \lq{\it abelian $\mathrm{Gpd}$-category}' in \cite{D}.
Thus there are three different classes of 2-categories\\
$\bullet$ (Relatively exact 2-category)\\
$\bullet$ (2-abelian $\mathrm{Gpd}$-category)\\
$\bullet$ (abelian $\mathrm{Gpd}$-category)\\
defined as 2-dimensional analogs of abelian categories.
So it will be necessary to make explicit the relations.

We compare these notions, using the arguments in \cite{N} and \cite{D}.
Since they proceed independently in their own way, in different settings and terminologies, it will be worth while to collect and unify them.

In this paper, by comparing their definitions and arguments, we show the relationship among three classes of 2-categories mentioned above. In Theorem \ref{CompThm}, we show there are implications for these notions
\[ \mathrm{(2\text{-}Abelian\ Gpd)}\Rightarrow\mathrm{(Relatively\ exact)}\Rightarrow\mathrm{(Abelian\ Gpd)}, \]
except for some minor differences (see Theorem \ref{CompThm}).

\section{Preliminaries}

Let $\mathbf{S}$ denote a 2-category (in the strict sense). We use the following notation.\\
$\mathbf{S}^0$, $\mathbf{S}^1$, $\mathbf{S}^2:$ class of 0-cells, 1-cells, and 2-cells in $\mathbf{S}$, respectively.\\
$\mathbf{S}^1(A,B):$ 1-cells from $A$ to $B$, where $A,B\in\mathbf{S}^0$.\\
$\mathbf{S}^2(f,g):$ 2-cells from $f$ to $g$, where $f,g\in\mathbf{S}^1(A,B)$ for certain $A,B\in\mathbf{S}^0$.\\
$\mathbf{S}(A,B):$ $\mathrm{Hom}$-category between $A$ and $B$\\
$($i.e. $\mathrm{Ob}(\mathbf{S}(A,B))=\mathbf{S}^1(A,B)$, $\mathbf{S}(A,B)(f,g)=\mathbf{S}^2(f,g))$.

In diagrams, $\longrightarrow$ represents a 1-cell, $\Longrightarrow$ represents a 2-cell, $\circ$ represents a horizontal composition, and $\cdot$ represents a vertical composition. We use capital letters $A,B,\ldots$ for 0-cells, small letters $f,g,\ldots$ for 1-cells, and Greek symbols $\alpha,\beta,\ldots$ for 2-cells.

The composition of $A\overset{f}{\longrightarrow}B$ and $B\overset{g}{\longrightarrow}C$ is denoted by $g\circ f$, conversely to \cite{N}. Similarly for the composition of 2-cells.

In the following arguments, any 2-cell in a 2-category is invertible. 
This helps us to avoid being fussy about the directions of 2-cells, and we use the word \lq dual' simply to reverse the directions of 1-cells.
For example, {\it cokernel} is the dual notion of {\it kernel}, and {\it pullback} is dual to {\it pushout}. As for the definitions of (co-)kernels, pullbacks, and pushouts in a 2-category, see \cite{D} or \cite{N}. (The definitions in \cite{N} and \cite{D} agree.)

\section{Relatively exact 2-category}

Let $\mathrm{SCG}$ denote the 2-category of small symmetric categorical groups ($=$ symmetric 2-groups). This is denoted by $\mathrm{2\text{-}SGp}$ in \cite{D}.
0-cells are symmetric categorical groups, 1-cells are symmetric monoidal functors, and 2-cells are monoidal transformations (cf. \cite{RMV} or \cite{N}).

For any symmetric monoidal functor $f:A\rightarrow B$, let $f_I$ denote the unit isomorphism $f_I:f(0_A)\overset{\cong}{\longrightarrow} 0_B$, where $0_A$ and $0_B$ are respectively the unit of $A$ and $B$, with respect to $\otimes$.

\begin{definition}{\rm $($Definition 3.7 in \cite{N}$)$
\label{LocSCG}
A 2-category $\mathbf{S}$ is said to be {\it locally} $\mathit{SCG}$ if the following conditions are satisfied$:$\\
{\rm (LS1)} For every $A,B\in\mathbf{S}^0$, there is a given functor $\otimes_{A,B}:\mathbf{S}(A,B)\times\mathbf{S}(A,B)\rightarrow\mathbf{S}(A,B)$, and an object $0_{A,B}\in \mathrm{Ob}(\mathbf{S}(A,B))$ such that $(\mathbf{S}(A,B),\otimes_{A,B},0_{A,B})$ becomes a symmetric categorical group, and the following naturality conditions are satisfied$:$
\[ 0_{A,B}\circ0_{B,C}=0_{A,C}\quad\quad(\forall A,B,C\in\mathbf{S}^0) \]
\noindent{\rm (LS2)} $\mathrm{Hom}=\mathbf{S}(-,-):\mathbf{S}^{\mathrm{op}}\times\mathbf{S}\rightarrow\mathrm{SCG}$ is a 2-functor (in the strict sence).

Moreover, for any $A,B,C\in\mathbf{S}^0$,
\begin{eqnarray}
(-\circ 0_{A,B})_I&=\mathrm{id}_{0_{A,C}}\in\mathbf{S}^2(0_{A,C},0_{A,C})\label{1-6}\\
(0_{B,C}\circ -)_I&=\mathrm{id}_{0_{A,C}}\in\mathbf{S}^2(0_{A,C},0_{A,C}).\label{1-7}
\end{eqnarray}
are satisfied.
\[
\xy
(-16,0)*+{A}="0";
(0,4)*+{B}="2";
(16,0)*+{C}="4";
{\ar^{0_{A,B}} "0";"2"};
{\ar^{0_{B,C}} "2";"4"};
{\ar@/_1.20pc/_{0_{A,C}} "0";"4"};
{\ar@{=>}_{\mathrm{id}} (0,1)*{};(0,-3)*{}};
\endxy
\]
(Remark that $(-\circ 0_{A,B})$ and $(0_{B,C}\circ -)$ are symmetric monoidal functors.)

\noindent{\rm (LS3)} There is a 0-cell $0\in\mathbf{S}^0$ called {\it zero object}, which satisfies the following conditions:\\
\ \ \ {\rm (ls3-1)} For any $f:0\rightarrow A$ in $\mathbf{S}$, there exists a unique 2-cell $\theta_f\in\mathbf{S}^2(f,0_{0,A})$.\\
\ \ \ {\rm (ls3-2)} For any $f:A\rightarrow 0$ in $\mathbf{S}$, there exists a unique 2-cell $\tau_f\in\mathbf{S}^2(f,0_{A,0})$.\\
{\rm (LS3$+$)} $\mathbf{S}(0,0)$ is the zero categorical group.\\
{\rm (LS4)} For any $A,B\in\mathbf{S}^0$, their product and coproduct exist.
}\end{definition}

\begin{caution}\label{ZeroCaution}{\rm
In \cite{N}, {\it zero object} was also assumed to satisfy {\rm (LS3$+$)}.
On the other hand, the definition of zero object in \cite{D} only requires {\rm (ls3-1)} and{\rm (ls3-2)}.
In fact, condition {\rm (LS3$+$)} is not used essentially in \cite{N}. So in the following, we mainly consider lically $\mathrm{SCG}$ 2-categories without condition {\rm (LS3$+$)}.
}
\end{caution}

\begin{definition}\label{RE}{\rm (Definition 3.7 in \cite{N})
Let $\mathbf{S}$ be a locally $\mathrm{SCG}$ 2-category. $\mathbf{S}$ is said to be {\it relatively exact} if the following conditions are satisfied$:$\\
{\rm (RE1)} For any 1-cell $f$, its kernel and cokernel exist.\\
{\rm (RE2)} Any 1-cell $f$ is faithful if and only if $f=\mathrm{ker}(\mathrm{cok}(f))$.\\
{\rm (RE3)} Any 1-cell $f$ is cofaithful if and only if $f=\mathrm{cok}(\mathrm{ker}(f))$.
}
\end{definition}
(For the definitions of (fully) (co-) faithfulness, see \cite{N} or \cite{DV}.)

\begin{remark}\label{RemKer}{\rm
For any 1-cell $f:A\rightarrow B$, its kernel is defined as the triplet $(\mathrm{Ker}(f),\mathrm{ker}(f),\varepsilon_f)$
\[
\xy
(-20,0)*+{\mathrm{Ker}(f)}="0";
(0,0)*+{A}="2";
(20,0)*+{B}="4";
{\ar_{\mathrm{ker}(f)} "0";"2"};
{\ar_{f} "2";"4"};
{\ar@/^1.60pc/^{0} "0";"4"};
{\ar@{=>}_{\varepsilon_f} (0,2)*{};(0,6)*{}};
\endxy
,
\]
universal among those $(K,k,\varepsilon)$
\[
\xy
(-16,0)*+{K}="0";
(0,0)*+{A}="2";
(16,0)*+{B}="4";
{\ar_{k} "0";"2"};
{\ar_{f} "2";"4"};
{\ar@/^1.60pc/^{0} "0";"4"};
{\ar@{=>}_{\varepsilon} (0,2)*{};(0,6)*{}};
\endxy
.
\]
For the precise definition, see \cite{N} or \cite{D}.
Dually, the cokernel of $f$ is the universal triplet $(\mathrm{Cok}(f),\mathrm{cok}(f),\pi_f)$
\[
\xy
(-20,0)*+{A}="0";
(0,0)*+{B}="2";
(20,0)*+{\mathrm{Cok}(f)}="4";
{\ar_{f} "0";"2"};
{\ar_{\mathrm{cok}(f)} "2";"4"};
{\ar@/^1.60pc/^{0} "0";"4"};
{\ar@{=>}_{\pi_f} (0,2)*{};(0,6)*{}};
\endxy
.
\]
}
\end{remark}

\section{(2-)Abelian $\mathrm{Gpd}$-category}

(2-)Abelian $\mathrm{Gpd}$-categories, defined in \cite{D}, are $\mathrm{Gpd}^{\ast}$-categories satisfying certain conditions.
By definition, a $\mathrm{Gpd}^{\ast}$-category is a category $\mathcal{C}$ enriched by the category $\mathrm{Gpd}^{\ast}$ of small pointed groupoids (Proposition 70 in \cite{D}).
For any $A,B\in\mathrm{Ob}(\mathcal{C})$, the distinguished point in $\mathcal{C}(A,B)$ is denoted by $0_{A,B}$ or simply by $0$.

In \cite{D}, it is remarked that any $\mathrm{Gpd}^{\ast}$-category $\mathcal{C}$ is equivalent to a {\it strictly described} one, and thus $\mathcal{C}$ is assumed to be strictly described, namely, it satisfies the following:\\
{\rm (SD1)} For any sequence of morphisms
\[ A\overset{f}{\longrightarrow}B\overset{g}{\longrightarrow}C\overset{h}{\longrightarrow}D \]
in $\mathcal{C}$,
\[ h\circ(g\circ f)=(h\circ g)\circ f \]
is satisfied.\\
{\rm (SD2)} For any $f:A\rightarrow B$ in $\mathcal{C}$,
\begin{eqnarray*}
f\circ\mathrm{id}_A=f\\
\mathrm{id}_B\circ f=f
\end{eqnarray*}
are satisfied.\\
{\rm (SD3)} For any $f:A\rightarrow B$ and any objects $A^{\prime}, B^{\prime}$ in $\mathcal{C}$,
\begin{eqnarray*}
f\circ 0_{A^{\prime},A}=0_{A^{\prime},B}\\
0_{B,B^{\prime}}\circ f=0_{A,B^{\prime}}
\end{eqnarray*}
are satisfied.\\
{\rm (SD4)} For any %
$\xy%
(-8,0)*+{A}="0";%
(8,0)*+{B}="2";%
{\ar@/^1.00pc/^{f} "0";"2"};%
{\ar@/_1.00pc/_{g} "0";"2"};%
{\ar@{=>}_{\alpha} (0,2)*{};(0,-2)*{}};%
\endxy%
$ %
and any objects $A^{\prime}, B^{\prime}$ in $\mathcal{C}$,
\begin{eqnarray*}
\alpha\circ0_{A^{\prime},A}=\mathrm{id}_{0_{A^{\prime},B}}\\
0_{B,B^{\prime}}\circ\alpha=\mathrm{id}_{0_{A,B^{\prime}}}
\end{eqnarray*}
are satisfied.

\begin{remark}{\rm
A $\mathrm{Gpd}^{\ast}$-category $\mathcal{C}$ is regarded as a 2-category in the following, and we use 2-categorical terminologies, e.g. \lq 0-cell' for an object, \lq 1-cell' for an arrow.
}\end{remark}

\begin{definition}\label{AG}{\rm (Definition 165 in \cite{D})
An {\it abelian $\mathit{Gpd}$-category} is a $\mathrm{Gpd}^{\ast}$-category $\mathcal{C}$ with zero object, finite $($co-$)$products and $($co-$)$kernels, satisfying the following conditions$:$\\
{\rm (AG1)} Every 0-monomorphic 1-cell $f$ satisfies $f=\mathrm{ker}(\mathrm{cok}(f))$.\\
{\rm (AG2)} Every 0-epimorphic 1-cell $f$ satisfies $f=\mathrm{cok}(\mathrm{ker}(f))$.\\
{\rm (AG3)} Fully 0-faithful 1-cells and 0-monomorphic 1-cells are stable under pushout.\\
{\rm (AG4)} Fully 0-cofaithful 1-cells and 0-epimorphic 1-cells are stable under pullback.
}\end{definition}

\begin{definition}\label{2AG}{\rm (Definition 179 and 183 in \cite{D})
A {\it 2-abelian $\mathit{Gpd}$-category} is a $\mathrm{Gpd}^{\ast}$-category $\mathcal{C}$ with zero object, finite $($co-$)$products and $($co-$)$kernels, satisfying the following conditions$:$\\
{\rm (2AG1)} If $f$ is a 0-faithful 1-cell, then $f=\mathrm{ker}(\mathrm{cok}(f))$.\\
{\rm (2AG2)} If $f$ is a 0-cofaithful 1-cell, then $f=\mathrm{cok}(\mathrm{ker}(f))$.\\
{\rm (2AG3)} Any fully 0-faithful 1-cell is canonically the root of its copip.\\
{\rm (2AG4)} Any fully 0-cofaithful 1-cell is canonically the coroot of its pip.
}\end{definition}

For the definitions of (co-)roots and (co-)pips, see \cite{D}. We do not require them explicitly in the following arguments. We introduce the rest of the notions appearing in the above definitions.
The definition of 0-monomorphic 1-cells is the following. 0-epimorphicity is defined dually.
\begin{definition}{\rm (Definition 118 in \cite{D})
A 1-cell $f:A\rightarrow B$ is {\it 0-monomorphic} if, for any 1-cell $a:X\rightarrow A$ and any 2-cell $\beta:f\circ a\Longrightarrow 0$ compatible with $\pi_f$ (of Remark \ref{RemKer}), there exists a unique $\alpha:a\Longrightarrow 0$ such that $f\circ\alpha=\beta$.
}
\end{definition}

The definitions of (fully) 0-faithful 1-cells are the following. (Fully) 0-cofaithful 1-cells are defined dually.
\begin{definition}{\rm (Definition 78, 80 in \cite{D})
Let $\mathcal{C}$ be a $\mathrm{Gpd}^{\ast}$-category, and $f:A\rightarrow B$ be a 1-cell in $\mathcal{C}$.

\noindent{\rm (i)} $f$ is {\it 0-faithful} if for any $\xy%
(-8,0)*+{X}="0";%
(8,0)*+{A}="2";%
{\ar@/^1.00pc/^{0} "0";"2"};%
{\ar@/_1.00pc/_{0} "0";"2"};%
{\ar@{=>}_{\alpha} (0,2)*{};(0,-2)*{}};%
\endxy%
$ %
in $\mathcal{C}$,
\[ f\circ\alpha=\mathrm{id}_0\ \ \ \Rightarrow\ \ \ \alpha=\mathrm{id}_0 \]
is satisfied.

\noindent{\rm (ii)} $f$ is {\it fully 0-cofaithful} if for any 1-cell $a:X\rightarrow A$ and any 2-cell $\xy%
(-8,0)*+{X}="0";%
(8,0)*+{B}="2";%
{\ar@/^1.00pc/^{f\circ\alpha} "0";"2"};%
{\ar@/_1.00pc/_{0} "0";"2"};%
{\ar@{=>}_{\alpha} (0,2)*{};(0,-2)*{}};%
\endxy%
$%
,
there exists a unique 2-cell $\alpha:a\Longrightarrow 0$ such that $\beta=f\circ\alpha$.
}
\end{definition}

\begin{fact}{\rm 
In \cite{D}, it is shown that any 2-abelian $\mathrm{Gpd}$-category $\mathcal{C}$ admits a weak enrichment by $\mathrm{SCG}$, i.e., $\mathcal{C}$ is {\it preadditive}, in the terminology of \cite{D}.
}
\end{fact}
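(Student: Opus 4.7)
The plan is to construct, for each pair $A,B\in\mathcal{C}^0$, a symmetric categorical group structure on $\mathcal{C}(A,B)$ out of the biproduct data, and then verify that composition is weakly bilinear. The starting observation is that a $\mathrm{Gpd}^{\ast}$-category with zero object and both finite products and finite coproducts already comes with a canonical comparison 1-cell $A\sqcup B\to A\times B$ built from the four matrix entries $\mathrm{id},0,0,\mathrm{id}$. First I would show that in a 2-abelian $\mathrm{Gpd}$-category this comparison is an equivalence, so that finite products and coproducts can be identified as biproducts. This is where the 0-faithfulness/0-cofaithfulness axioms (2AG1)-(2AG2) are used: the comparison is easily seen to be both 0-faithful and 0-cofaithful (its composition with the product projections or coproduct injections realises identities and zeros), and then (2AG1)-(2AG2) force it to be $\ker\mathrm{cok}$ and $\mathrm{cok}\ker$ of itself, from which one deduces invertibility up to a coherent 2-cell.

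With biproducts in hand, I would define the monoidal structure $\otimes_{A,B}$ on $\mathcal{C}(A,B)$ by the usual formula
\[
f\otimes g\;:=\;\nabla_B\circ(f\times g)\circ\Delta_A,
\]
where $\Delta_A\colon A\to A\times A$ is the diagonal and $\nabla_B\colon B\sqcup B\simeq B\times B\to B$ is the codiagonal obtained via the biproduct equivalence. The strict description conditions (SD3)-(SD4) ensure that the zero 1-cell $0_{A,B}$ is a strict unit for this operation, and the universal properties of $A\times A$ and $B\sqcup B$ yield natural associator and symmetry isomorphisms from the unique comparison 2-cells between iterated biproducts. Verifying the pentagon and hexagon axioms is a diagram chase in the universal properties of the biproducts.

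The delicate step is constructing inverses, i.e.\ promoting this symmetric monoidal groupoid to a symmetric categorical group. Given $f\colon A\to B$, consider the shear 1-cell $s\colon A\times A\to B\times B$ whose matrix entries are $f,f,0,\mathrm{id}$; one checks that $s$ is 0-faithful and 0-cofaithful by chasing its action on the biproduct projections, and therefore by (2AG1)-(2AG2) it equals $\ker\mathrm{cok}(s)=\mathrm{cok}\ker(s)$. From the universal property of $\ker\mathrm{cok}(s)$ one extracts a 1-cell $-f\colon A\to B$ together with a 2-cell $f\otimes(-f)\Rightarrow 0$, and using (2AG3)-(2AG4) (plus the definition of (co)roots via (co)pips) one shows this 2-cell is canonical and that $-f$ is an inverse up to coherent isomorphism. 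Naturality in $f$ and compatibility with $\otimes$ follow from the uniqueness built into the universal properties.

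Finally, I would check that for every $A,B,C$, horizontal composition $-\circ-\colon\mathcal{C}(B,C)\times\mathcal{C}(A,B)\to\mathcal{C}(A,C)$ is symmetric monoidal in each variable, which at the level of objects is exactly bilinearity of composition under biproducts and at the level of 2-cells follows from (SD3)-(SD4); the unit isomorphisms $(-\circ 0)_I$ and $(0\circ -)_I$ then satisfy the analogues of (1-6)-(1-7) automatically by strict description. The main obstacle I anticipate is the inverse construction: biproducts by themselves give only a commutative monoid structure, and extracting an honest group structure genuinely requires the strong faithfulness axioms (2AG1)-(2AG4), so the $-f$ construction and the verification that it is coherent (rather than merely existing up to non-canonical iso) is the real content of the fact.
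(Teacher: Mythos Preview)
The paper does not prove this statement at all: it is recorded as a \emph{Fact} with a bare citation to \cite{D}, and no argument is given. So there is nothing in the paper to compare your proposal against; your write-up is an attempt to reconstruct what Dupont does, not what the present paper does.

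That said, your outline is broadly the right shape (biproducts give a symmetric monoidal structure on each $\mathcal{C}(A,B)$; the hard part is producing inverses), but the inverse construction as written has a type error. You define a shear $s\colon A\times A\to B\times B$ with matrix entries $f,f,0,\mathrm{id}$; when $A\neq B$ there is no $\mathrm{id}\colon A\to B$, so this 1-cell does not exist. The usual move is to build the shear on a single object, e.g.\ $\left(\begin{smallmatrix}1&1\\0&1\end{smallmatrix}\right)\colon A\oplus A\to A\oplus A$, show it is an equivalence using the (co)kernel axioms, read off $-\mathrm{id}_A$ from its inverse, and then set $-f:=f\circ(-\mathrm{id}_A)$. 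Alternatively one argues via $\mathrm{cok}(\Delta_A)$, which in the presence of biproducts is a map $A\oplus A\to A$ whose restrictions to the two summands are forced to be $\mathrm{id}_A$ and its additive inverse. Either route repairs the gap, but as stated your shear does not type-check, so the step where you ``extract a 1-cell $-f$'' from the universal property of $\ker\mathrm{cok}(s)$ is not well-posed.
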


For the general definition of a preadditive $\mathrm{Gpd}$-category, see \cite{D}. We only consider the case where $\mathcal{C}$ is strictly described.
(In this case, the natural transformations appearing in Definition 218 in \cite{D} are identities)

\begin{definition}{\rm 
A strictly described $\mathrm{Gpd}$-category $\mathcal{C}$ is {\it preadditive} if it satisfies the following:

\noindent{\rm (o)} For any 0-cells $A,B$ in $\mathcal{C}$, $\mathrm{Hom}$-category $\mathcal{C}(A,B)$ is equipped with a structure of a symmetric categorical group $(\mathcal{C}(A,B),\otimes,0)$.

\noindent{\rm (a1)} For any 1-cell $A\overset{f}{\longrightarrow}B$ and any 0-cell $C$ in $\mathcal{C}$, the composition by $f$
\[ -\circ f:\mathcal{C}(B,C)\rightarrow\mathcal{C}(A,C) \]
is symmetric monoidal.

\noindent{\rm (a2)} The dual of {\rm (a1)}.

\noindent{\rm (b1)} For any 1-cells $A\overset{f}{\longrightarrow}B\overset{g}{\longrightarrow}C$ and any 0-cell $D$ in $\mathcal{C}$, we have
\[
\xy
(-12,7)*+{\mathcal{C}(C,D)}="0";
(12,7)*+{\mathcal{C}(B,D)}="2";
(0,-8)*+{\mathcal{C}(A,D)}="4";
(0,8)*+{}="6";
{\ar^{-\circ g} "0";"2"};
{\ar_{-\circ (g\circ f)} "0";"4"};
{\ar^{-\circ f} "2";"4"};
{\ar@{}|\circlearrowright"6";"4"};
\endxy
\]
as monoidal functors.

\noindent{\rm (b2)} The dual of {\rm (b1)}.

\noindent{\rm (c)} For any 1-cells $A\overset{f}{\longrightarrow}B$ and $C\overset{g}{\longrightarrow}D$, we have
\[
\xy
(-12,6)*+{\mathcal{C}(B,C)}="0";
(12,6)*+{\mathcal{C}(A,C)}="2";
(-12,-6)*+{\mathcal{C}(B,D)}="4";
(12,-6)*+{\mathcal{C}(A,D)}="6";
{\ar^{-\circ f} "0";"2"};
{\ar_{g\circ -} "0";"4"};
{\ar^{g\circ -} "2";"6"};
{\ar_{-\circ f} "4";"6"};
{\ar@{}|\circlearrowright"0";"6"};
\endxy
\]
as monoidal functors.

\noindent{\rm (d)} For any 0-cells $A$ and $B$ in $\mathcal{C}$, we have
\[
\xy
(-12,0)*+{\mathcal{C}(A,B)}="0";
(12,0)*+{\mathcal{C}(A,B)}="2";
(0,3)*+{_{\circlearrowright}}="6";
(0,-3)*+{_{\circlearrowright}}="8";
{\ar@/^1.20pc/^{\mathrm{id}_B\circ -} "0";"2"};
{\ar@/_1.20pc/_{-\circ\mathrm{id}_A} "0";"2"};
{\ar|{\mathrm{id}} "0";"2"};
\endxy
\]

\noindent{\rm (e1)} For any 0-cell $X$ and any $\ell,k:X\rightarrow A$,
\[ (-\circ f)_{\ell,k}:(\ell\otimes k)\circ f\Longrightarrow (\ell\circ f)\otimes(k\circ f)\ \ \ (\forall f:A\rightarrow B) \]
is natural in $f$.
\[
\xy
(-16,0)*+{X}="0";
(-14,1)*+{}="01";
(-14,-1)*+{}="02";
(0,0)*+{A}="2";
(-2,1)*+{}="21";
(-2,-1)*+{}="22";
(14,0)*+{B}="4";
{\ar^{\ell} "01";"21"};
{\ar_{k} "02";"22"};
{\ar^{f} "2";"4"};
\endxy
\]

\noindent{\rm (e2)} The dual of {\rm (e1)}.

\noindent{\rm (f1)} For any $X$,
\[ (-\circ f)_I:0_{X,A}\circ f\Longrightarrow 0_{X,B}\ \ \ (\forall f:A\rightarrow B) \]
is natural in $f$.

\noindent{\rm (f2)} The dual of {\rm (f1)}.

Here, since $(-\circ f)$ is monoidal, $(-\circ f)_{\ell,k}$ denotes the structure isomorphism
\[ (-\circ f)_{\ell,k}:(\ell\otimes k)\circ f\Longrightarrow (\ell\circ f)\otimes(k\circ f) \]
natural in $\ell,k:X\rightarrow A$.

Similarly, $(-\circ f)_I$ denotes the unit isomorphism.
}
\end{definition}

\section{Comparison}

\begin{lemma}\label{LemPre}{\rm 
If $\mathcal{C}$ is a strictly described preadditive $\mathrm{Gpd}$-category, then 
\[ \mathrm{Hom}=\mathcal{C}(-,-):\mathcal{C}^{\mathrm{op}}\times\mathcal{C}\rightarrow\mathrm{SCG} \]
is a 2-functor.
}
\end{lemma}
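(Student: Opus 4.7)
The plan is to verify the data and axioms for a strict 2-functor $\mathrm{Hom} : \mathcal{C}^{\mathrm{op}} \times \mathcal{C} \to \mathrm{SCG}$ piece by piece, showing that each clause is handed to us by a specific axiom of strictly described preadditivity. On 0-cells, axiom (o) declares $\mathcal{C}(A,B)$ to be a symmetric categorical group, so the assignment on 0-cells lands in $\mathrm{SCG}^{0}$. For a 1-cell $(f,g):(A,B)\to(A',B')$ in $\mathcal{C}^{\mathrm{op}}\times\mathcal{C}$ (that is, $f:A'\to A$ and $g:B\to B'$ in $\mathcal{C}$), the functor $g\circ-\circ f$ factors as $(g\circ-)\circ(-\circ f)$; the two outer factors are symmetric monoidal by (a1) and (a2), their composite is therefore symmetric monoidal, and (c) ensures that the other possible factoring yields the same symmetric monoidal structure. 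Thus $\mathrm{Hom}(f,g)$ is unambiguously a 1-cell in $\mathrm{SCG}$.

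For strict preservation of composition of 1-cells, given composable $(f_{1},g_{1})$ and $(f_{2},g_{2})$, the equality of underlying functors $\mathrm{Hom}((f_{1},g_{1})\circ(f_{2},g_{2}))=\mathrm{Hom}(f_{2},g_{2})\circ\mathrm{Hom}(f_{1},g_{1})$ is immediate from (SD1), while the equality of the monoidal structure isomorphisms on both sides is exactly what (b1) and (b2) assert. The identity case $\mathrm{Hom}(\mathrm{id}_{A},\mathrm{id}_{B})=\mathrm{id}_{\mathcal{C}(A,B)}$ is handed to us by (SD2) at the level of functors and by (d) at the level of monoidal structure.

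On 2-cells $\alpha:f\Rightarrow f'$ in $\mathcal{C}^{\mathrm{op}}$ and $\beta:g\Rightarrow g'$ in $\mathcal{C}$, I define $\mathrm{Hom}(\alpha,\beta)$ by whiskering; to promote this to a 2-cell in $\mathrm{SCG}$ I must check that it is a \emph{monoidal} natural transformation, i.e.\ compatible with the tensor product and with the unit of each $\mathcal{C}(A,B)$. This compatibility is precisely the content of (e1), (e2), (f1), (f2), which say that the structure isomorphisms $(-\circ f)_{\ell,k}$ and $(-\circ f)_{I}$ (and their duals) are natural in $f$. Preservation of vertical and horizontal composition of 2-cells then follows from functoriality of whiskering together with the middle four interchange law in $\mathcal{C}$.

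The task is essentially a systematic pairing of preadditivity axioms with the clauses of 2-functoriality rather than an argument with a single decisive step; the main obstacle is therefore one of bookkeeping — enumerating the 2-functor axioms side by side with (o), (a1), (a2), (b1), (b2), (c), (d), (e1), (e2), (f1), (f2), being careful that the contravariant variable contributes the dual axioms (a2), (b2), (e2), (f2), and checking that no coherence condition is left uncovered.
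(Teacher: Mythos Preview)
Your proposal is correct and follows essentially the same approach as the paper: you reduce the 2-functoriality of $\mathrm{Hom}$ to the same four checks (symmetric monoidality of $g\circ-\circ f$, monoidality of $\beta\circ-\circ\alpha$, strict preservation of composition, strict preservation of identities) and pair each one with the same preadditivity axioms the paper invokes, namely (a1),(a2),(c) for the first, (e1),(e2),(f1),(f2) for the second, (b1),(b2),(c) for the third, and (d) for the fourth. Your write-up is slightly more explicit than the paper's in also naming (SD1), (SD2) for the underlying-functor equalities and in remarking on vertical and horizontal composition of 2-cells, but the argument is the same.
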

\begin{proof}
For the definition of a 2-functor, see Definition 7.2.1 in \cite{B}.
It can be easily shown that, to show the lemma, it suffices to show the following conditions:

\noindent{\rm (i)} For any 1-cells $A^{\prime}\overset{f}{\longrightarrow}A$ and $B\overset{g}{\longrightarrow}B^{\prime}$,
\[ g\circ -\circ f:\mathcal{C}(A,B)\rightarrow \mathcal{C}(A^{\prime},B^{\prime}) \]
is a symmetric monoidal functor.

\noindent{\rm (ii)} For any 2-cells $\xy%
(-8,0)*+{A^{\prime}}="0";%
(8,0)*+{A}="2";%
{\ar@/^1.00pc/^{f} "0";"2"};%
{\ar@/_1.00pc/_{f^{\prime}} "0";"2"};%
{\ar@{=>}_{\alpha} (0,2)*{};(0,-2)*{}};%
\endxy%
$
and
$\xy%
(-8,0)*+{B^{\prime}}="0";%
(8,0)*+{B}="2";%
{\ar@/^1.00pc/^{g} "0";"2"};%
{\ar@/_1.00pc/_{g^{\prime}} "0";"2"};%
{\ar@{=>}_{\beta} (0,2)*{};(0,-2)*{}};%
\endxy%
$,
\[ \beta\circ-\circ\alpha:g\circ-\circ f\Longrightarrow g^{\prime}\circ-\circ f^{\prime} \]
is a monoidal transformation.

\noindent{\rm (iii)} For any 1-cells $A^{\prime\prime}\overset{f^{\prime}}{\longrightarrow}A^{\prime}\overset{f}{\longrightarrow}A$ and $B\overset{g}{\longrightarrow}B^{\prime}\overset{g^{\prime}}{\longrightarrow}B^{\prime\prime}$, we have
\[
\xy
(-12,7)*+{\mathcal{C}(A,B)}="0";
(12,7)*+{\mathcal{C}(A^{\prime},B^{\prime})}="2";
(0,-8)*+{\mathcal{C}(A^{\prime\prime},B^{\prime\prime})}="4";
(0,8)*+{}="6";
{\ar^{g\circ-\circ f} "0";"2"};
{\ar_{(g^{\prime}\circ g)\circ-\circ (f^{\prime}\circ f)} "0";"4"};
{\ar^{g^{\prime}\circ-\circ f^{\prime}} "2";"4"};
{\ar@{}|\circlearrowright"6";"4"};
\endxy
\]
as monoidal functors.

\noindent{\rm (iv)} For any 0-cells $A$ and $B$ in $\mathcal{C}$, we have
\[
\xy
(-12,0)*+{\mathcal{C}(A,B)}="0";
(12,0)*+{\mathcal{C}(A,B)}="2";
(0,0)*+{_{\circlearrowright}}="4";
{\ar@/^1.00pc/^{\mathrm{id}\circ-\circ\mathrm{id}} "0";"2"};
{\ar@/_1.00pc/_{\mathrm{id}} "0";"2"};
\endxy
\]
as monoidal functors.

{\rm (iv)} follows from {\rm (d)}.
{\rm (i)} follows from {\rm (a1)}, {\rm (a2)} (and {\rm (c)}).
{\rm (iii)} follows from {\rm (b1)}, {\rm (b2)} (and {\rm (c)}).
{\rm (ii)} follows from {\rm (e1)}, {\rm (e2)}, {\rm (f1)}, {\rm (f2)}.
\end{proof}

\begin{lemma}\label{LemFinal}{\rm
Let $f:A\rightarrow B$ be any 1-cell in a relatively exact 2-category. Then the following are satisfied.

\noindent{\rm (i)} $f$ is faithful if and only if it is 0-faithful, if and only if it is 0-monomorphic.

\noindent{\rm (ii)} $f$ is fully faithful if and only if it is fully 0-faithful.}
\end{lemma}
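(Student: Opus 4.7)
The heart of the argument is that $\mathbf{S}$ being locally SCG means each hom-category $\mathbf{S}(X,A)$ is a symmetric categorical group, and by (LS2) the post-composition $f\circ -\colon \mathbf{S}(X,A)\to \mathbf{S}(X,B)$ is symmetric monoidal. This lets me translate conditions on 2-cells between arbitrary parallel pairs into conditions on 2-cells at $0$, which is exactly what the ``0-'' prefixes in Dupont's notions encode.

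For (i), my plan is a three-step cycle. First, faithful $\Leftrightarrow$ 0-faithful: in any symmetric categorical group $\mathbf{S}(X,A)$, the set $\mathbf{S}^2(a,b)$ is either empty or a torsor over the group $\mathbf{S}^2(0_{X,A},0_{X,A})$ via tensoring with $a^{-1}\otimes b$ (composed with the unit isomorphism); the symmetric monoidality of $f\circ -$ makes the map $f\circ -\colon \mathbf{S}^2(a,b)\to \mathbf{S}^2(f\circ a,f\circ b)$ equivariant for this torsor structure, so its injectivity on every parallel pair is equivalent to its injectivity on the single group $\mathbf{S}^2(0_{X,A},0_{X,A})$, i.e.\ to 0-faithfulness. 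Second, 0-monomorphic $\Rightarrow$ 0-faithful drops out of the uniqueness clause applied to $a=0_{X,A}$. Third, 0-faithful $\Rightarrow$ 0-monomorphic: by the already-established equivalence, $f$ is faithful, so (RE2) identifies $f$ with $\mathrm{ker}(\mathrm{cok}(f))$, and the 2-categorical universal property of this kernel, fed the datum $(a,\beta)$, produces a unique $\alpha\colon a\Rightarrow 0$ with $f\circ\alpha=\beta$, which is exactly 0-monomorphicity.

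For (ii), the same tensor translation converts bijectivity of $f\circ -$ on every $\mathbf{S}^2(a,b)$ into bijectivity of $f\circ -$ on $\mathbf{S}^2(0,c)$ for every $c\colon X\to A$, since $\mathbf{S}^2(a,b)\cong \mathbf{S}^2(0,b\otimes a^{-1})$ naturally in a way preserved by the symmetric monoidal functor $f\circ -$. A bijection on $\mathbf{S}^2(0,c)$ for variable $c$ is exactly the fully 0-faithful condition (equivalently, after inverting 2-cells, the fully 0-cofaithful condition, which is harmless in the SCG setting).

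The step I expect to be the main obstacle is checking, in the 0-faithful $\Rightarrow$ 0-monomorphic direction, that the ``compatibility with $\pi_f$'' clause in the definition of 0-monomorphicity matches the hypothesis that the kernel universal property consumes. The identification is essentially tautological once both sides are unwound, but it has to be written out carefully; by contrast, the tensor-with-inverse bookkeeping used to reduce every parallel-pair question to the ``at $0$'' case is entirely routine once the symmetric monoidality of $f\circ -$ is in place.
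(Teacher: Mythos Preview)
Your proposal is correct and follows essentially the same route as the paper's proof: the paper simply cites Corollary~3.24, Lemma~3.19 and Lemma~3.22(2) of \cite{N} together with the remark after Definition~118 in \cite{D}, and your torsor/translation argument, the ``uniqueness at $a=0$'' step, and the use of (RE2) plus the kernel universal property are exactly the contents of those citations spelled out. The one point you flag as delicate---matching the compatibility with $\pi_f$ against the 2-cell part of the kernel universal property for $f=\mathrm{ker}(\mathrm{cok}(f))$---is indeed the crux, and it unwinds to an identity once one observes that the kernel structure 2-cell of $\mathrm{ker}(\mathrm{cok}(f))$ is canonically $\pi_f$ itself.
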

\begin{proof}
{\rm (i)} By Corollary 3.24 in \cite{N}, $f$ is faithful if and only if it is 0-faithful.
As remarked after Definition 118 in \cite{D}, any 0-monomorphic 1-cell is faithful.
Conversely, if $f$ is faithful, then $f$ satisfies $f=\mathrm{ker}(\mathrm{cok}(f))$, and becomes 0-monomorphic by Lemma 3.19 in \cite{N}.

{\rm (ii)} This is nothing other than Lemma 3.22 (2) in \cite{N}.

\end{proof}

\begin{theorem}\label{CompThm}
{\rm There are the implications among the conditions on 2-categories
\[ (\text{2-Abelian}\ \mathrm{Gpd})\ \Rightarrow\ (\text{Relatively exact})\ \Rightarrow \ (\text{Abelian}\ \mathrm{Gpd}). \]
More precisely, we have:

\noindent{\rm (i)} Any strictly described 2-abelian $\mathrm{Gpd}$-category is a relatively exact 2-category without condition {\rm (LS3$+$)}.

\noindent{\rm (ii)} Any relatively exact 2-category without condition {\rm (LS3$+$)} is an abelian $\mathrm{Gpd}$-category not necessarily strictly described.

}
\end{theorem}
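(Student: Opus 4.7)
The plan is to verify each defining condition of the target 2-category notion from the hypotheses, invoking the stated Fact that a 2-abelian $\mathrm{Gpd}$-category is preadditive together with Lemmas \ref{LemPre} and \ref{LemFinal}. For part (i), let $\mathcal{C}$ be a strictly described 2-abelian $\mathrm{Gpd}$-category. I would first establish the locally $\mathrm{SCG}$ structure: axiom (o) of preadditivity supplies the $\mathrm{SCG}$-enrichment in (LS1), while the normalisation $0_{A,B}\circ 0_{B,C}=0_{A,C}$ comes directly from (SD3). Lemma \ref{LemPre} yields the 2-functoriality in (LS2), and the identities (\ref{1-6}) and (\ref{1-7}) follow from (SD4) applied to the unit isomorphisms of the monoidal functors $-\circ 0_{A,B}$ and $0_{B,C}\circ-$. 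The zero object with (ls3-1), (ls3-2), the (co)products, and the (co)kernels are given in Definition \ref{2AG}, covering (LS3), (LS4), and (RE1). For (RE2) observe that in this preadditive setting a 1-cell is faithful if and only if it is 0-faithful, so the ``only if'' direction is (2AG1); the converse holds since $\mathrm{ker}(\mathrm{cok}(f))$ is a kernel and kernels are 0-faithful, hence faithful. (RE3) is dual, using (2AG2). Condition (LS3$+$) is nowhere invoked, yielding the stated conclusion.

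For part (ii), let $\mathbf{S}$ be a relatively exact 2-category in the sense of Definition \ref{RE} without (LS3$+$). Axiom (LS1) endows each $\mathrm{Hom}$-category with an $\mathrm{SCG}$-structure, in particular a pointed groupoid with basepoint $0_{A,B}$, making $\mathbf{S}$ a $\mathrm{Gpd}^{\ast}$-category (not necessarily strictly described, since (LS1)--(LS2) do not impose strictness of composition on 2-cells). The structural data required by Definition \ref{AG} transfer directly from (LS3), (LS4), (RE1), the (co)kernel notions coinciding as recorded in the preliminaries. By Lemma \ref{LemFinal}(i), 0-monomorphic coincides with faithful, so (AG1) is precisely (RE2); dually (AG2) follows from (RE3).

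The main obstacle is verifying the stability conditions (AG3) and (AG4). By Lemma \ref{LemFinal}, these reduce to showing that faithful and fully faithful 1-cells are stable under pushout in a relatively exact 2-category, and dually that cofaithful and fully cofaithful 1-cells are stable under pullback. I would deduce the faithful case from the characterisation (RE2): if $f$ is faithful, then $f=\mathrm{ker}(\mathrm{cok}(f))$, and given a pushout $f'$ of $f$ one identifies $f'$ as the kernel of a suitably induced cokernel by tracing through the universal properties of pushout and cokernel, combined with the naturality afforded by the $\mathrm{SCG}$-enrichment (LS2). The fully faithful case uses the strengthened universal property supplied by Lemma \ref{LemFinal}(ii), and (AG4) is handled dually via (RE3) and pullbacks.
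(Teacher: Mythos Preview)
Your proposal follows essentially the same route as the paper's proof: verify the locally $\mathrm{SCG}$ axioms from preadditivity together with (SD3)/(SD4) and Lemma~\ref{LemPre}, derive (RE2)/(RE3) from (2AG1)/(2AG2) via the equivalence ``faithful $\Leftrightarrow$ 0-faithful'', and for part~(ii) use Lemma~\ref{LemFinal} to translate (AG1)/(AG2) back to (RE2)/(RE3). The one substantive difference is the treatment of (AG3)/(AG4): the paper does not argue these directly but simply invokes the duals of Propositions~3.32 and~5.12 of~\cite{N}, which already establish stability of faithful and fully faithful 1-cells under pullback in a relatively exact 2-category; your sketch (identify the pushout $f'$ as $\mathrm{ker}$ of an induced cokernel) is the right idea and is essentially what those propositions prove, but as written it is only a plan and would need the actual diagram-chase to stand on its own. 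Similarly, your assertion ``in this preadditive setting a 1-cell is faithful iff it is 0-faithful'' is exactly what the paper cites as Proposition~180 of~\cite{D}; it is true but not immediate, so a reference or short argument is warranted there as well.
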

\begin{proof}
First remark that each of these 2-categories is a $\mathrm{Gpd}^{\ast}$-category with zero object, finite (co-)products and (co-)kernels.

{\rm (i)}
Let $\mathcal{C}$ be a 2-abelian $\mathrm{Gpd}$-category.
$\mathcal{C}$ satisfies {\rm (LS1)}, as a particular case of {\rm (SD3)}.
By Lemma \ref{LemPre}, $\mathrm{Hom}=\mathcal{C}(-,-):\mathcal{C}^{\mathrm{op}}\times\mathcal{C}\rightarrow\mathrm{SCG}$ is a 2-functor.
Moreover, $(\ref{1-6})$ and $(\ref{1-7})$ in {\rm (LS2)} follows from {\rm (SD4)}.
Thus  $\mathcal{C}$ satisfies {\rm (LS2)}. 
By Proposition 180 in \cite{D}, any 1-cell in $\mathcal{C}$ is 0-faithful if and only if it is faithful. Thus {\rm (RE2)} follows from {\rm (2AG1)}. Dually, {\rm (RE3)} follows from {\rm (2AG2)}.

{\rm (ii)}
Let $\mathbf{S}$ be a relatively exact 2-category. By the duality, it suffices to show 
{\rm (AG1)} and {\rm (AG3)}.
By Lemma \ref{LemFinal}, we have equivalences of the notions
\begin{center}
faithful $=$ 0-faithful $=$ 0-monomorphic\\
fully faithful $=$ fully 0-faithful
\end{center}
for 1-cells in $\mathbf{S}$. Thus {\rm (AG1)} follows from {\rm (RE2)}, and {\rm (AG3)} follows from the duals of Proposition 3.32 and Proposition 5.12 in \cite{N}.

Remark also that {\rm (SD1)} and {\rm (SD2)} are satisfied, but {\rm (SD3)} and {\rm (SD4)} are not satisfied in general. So $\mathbf{S}$ is not necessarily strictly described as a $\mathrm{Gpd}^{\ast}$-category.
\end{proof}



\end{document}